 \newtheorem{proposition}{Proposition}[section]
 \newtheorem{lemma}[proposition]{Lemma}
 \newtheorem*{lemma*}{Lemma}
 \newtheorem{theorem}[proposition]{Theorem}
 \theoremstyle{definition}
 \newtheorem*{definition*}{Definition}
 \newtheorem*{example*}{Example}
\numberwithin{equation}{section}
\def\<{\langle}
\def\>{\rangle}
\definecolor{lightgray}{rgb}{0.666666,0.666666,0.666666}
\begin{document}

\title{A class of twisted partial Hopf actions}
\author{Quanguo Chen }
\address{College of Mathematics and Statistics, Kashi University, Xinjiang, Kashi, China\\}
\email{cqg211@163.com (Q.G.Chen)}
\thanks{This work was supported by the National Natural Science
Foundation of China (No. 12271292) and the Natural Foundation of Shandong Province(No. ZR2022MA002).}
\begin{abstract}
In this paper, we will introduce a novel method for constructing numerous examples of twisted partial Hopf actions. Utilizing split quaternions, split semi-quaternions, and $\frac{1}{4}$-quaternions as our subjects of study, we have obtained a class of twisted partial Hopf actions by examining the twisted partial actions of Sweedler Hopf algebra on these structures.
\end{abstract}
\subjclass[2020]{16T05}
\keywords{Hopf algebra, quaternion, partial action.}
\maketitle
\section{ Introduction }
\def\theequation{1. \arabic{equation}}
\setcounter{equation} {0} \hskip\parindent

 The concept of a partial Hopf action emerged as a generalization of traditional Hopf actions, first introduced and analyzed in  \cite{Caenepeel}. Partial Hopf actions have important applications in non-commutative geometry, quantum groups, $C^{\ast}$-algebras, and dynamical systems. Extending this framework, the notion of twisted partial Hopf actions was formalized in  \cite{Alves4}.  Further information around partial actions may be consulted in the survey \cite{Alves1}, \cite{Alves2}, \cite{Alves3} and \cite{Dokuchaev3}.
 
 Quaternions were first proposed by William Rowan Hamilton in 1843 as a non-commut
 ative extension of complex numbers into four-dimensional space\cite{Hamilton}, and are widely applied in fields such as geometry, physics, and computer graphics. Split quaternions, proposed by Sir James Cockle, and split semi-quaternions, alternatively termed degenerate split quaternions, further generalize these structures in \cite{Cockle} and \cite{Rosenfeld}.

In \cite{Alves4}, the authors leverage the interplay between algebraic groups and commutative Hopf algebras to elucidate methods for generating twisted partial Hopf actions. Such constructions are inherently intricate. This article aims to establish a class of twisted partial Hopf actions, providing a foundational framework for further exploration.

The paper is organized as follows.

In Section 2, we recall some basic definitions and results concerning split quaternions, split semi-quaternios, $\frac{1}{4}$-quaternions, Sweedler Hopf algebra and twisted partial Hopf action. In Section 3, we present explicit constructions of twisted partial actions of the Sweedler Hopf algebra on split semi-quaternions, while Sections 4 and 5 extend these results to split quaternions and $\frac{1}{4}$-quaternions, respectively. Section 6 concludes with a discussion of potential applications and future research directions.

\section{Preliminaries}
\def\theequation{2. \arabic{equation}}
\setcounter{equation} {0} \hskip\parindent

Throughout the paper, $\mathbb{R}$ denotes the real number field. All algebras and coalgebras are defined over $\mathbb{R}$ and  linear means $\mathbb{R}$-linear.  Tensor products without explicit notation are assumed to be over  $\mathbb{R}$. For a coalgebra $(C, \Delta, \varepsilon)$, we use Sweedler-Heynemann notation with summation sign suppressed for the comultiplication map $\Delta$, namely
$$
\Delta(c)=c_{1}\otimes c_{2},
$$
for all $c \in C$.  For further details on Hopf algebras, see \cite{Abe} and \cite{Montgomery}.

\subsection{Split quaternions}

An element  $q$  of the split quaternion algebra $\mathbb{H}_{s}$ takes
the form
$q=a_{0}1+a_{1}e_{1}+a_{2}e_{2}+a_{3}e_{3},$
where $a_{0}, a_{1}, a_{2}, a_{3}\in \mathbb{R}$ and $e_{1}, e_{2}, e_{3}$ satisfy the following relations:
$$
e_{1}^{2}=-1, e_{2}^{2}=1, e_{3}^{2}=1, e_{1}e_{2}=e_{3}=-e_{2}e_{1}, e_{2}e_{3}=- e_{1}=-e_{3}e_{2}, e_{3}e_{1}= e_{2}=-e_{1}e_{3}.
$$
Under these rules, split quaternions form an associative and distributive algebra.

\subsection{Split semi-quaternions}

An element  $q$  of the split semi-quaternion algebra $\mathbb{H}_{ss}$ is expressed as 
$q=a_{0}1+a_{1}e_{1}+a_{2}e_{2}+a_{3}e_{3},$
where $a_{0}, a_{1}, a_{2}, a_{3}\in \mathbb{R}$ and $e_{1}, e_{2}, e_{3}$ obeying:
$$
e_{1}^{2}=1, e_{2}^{2}=0, e_{3}^{2}=0, e_{1}e_{2}=e_{3}=-e_{2}e_{1}, e_{2}e_{3}=0=-e_{3}e_{2}, e_{3}e_{1}= -e_{2}=-e_{1}e_{3}.
$$
These rules ensure that split semi-quaternions constitute an associative and distributive algebra.

\subsection{$\frac{1}{4}$-quaternions}

A $\frac{1}{4}$-quaternion $q$ is an expression of
the form
$q=a_{0}+a_{1}e_{1}+a_{2}e_{2}+a_{3}e_{3},$
where $a_{0}, a_{1}, a_{2}, a_{3}\in \mathbb{R}$  and the basis elements satisfy:
$$
e_{1}^{2}=0, e_{2}^{2}=0, e_{3}^{2}=0, e_{1}e_{2}=e_{3}=-e_{2}e_{1}, e_{2}e_{3}=0=e_{3}e_{2}, e_{3}e_{1}=0=e_{1}e_{3}.
$$
The set of all $\frac{1}{4}$-quaternions is denoted  $\mathbb{H}_{00}$. With the specified addition and multiplication, $\mathbb{H}_{00}$ forms an associative algebra, referred to as the $\frac{1}{4}$-quaternion algebra.
\subsection{Sweedler Hopf Algebra} Sweedler Hopf algebra $\mathbb{H}_{4}$ is generated by two elements
$g$ and $\nu$  satisfying:
$$
g^{2}=1, \nu^{2}=0, g\nu+\nu g=0.
$$
Its comultiplication,  antipode and  counit of $\mathbb{H}_{4}$ are given by
$$
\Delta(g)=g\otimes g, \Delta(\nu)=g\otimes\nu+\nu\otimes 1, \varepsilon(g)=1, \varepsilon(\nu)=0, S(g)=g, S(\nu)=-g\nu.
$$
The dimension of $\mathbb{H}_{4}$ is four, with $1,g,\nu, g\nu$ forming a basis.

\subsection{Twisted partial actions}

Let $H$ be a Hopf algebra, $A$ a unital algebra with unit $1_{A}$. Let $\cdot: H\otimes A\rightarrow A$ and
$\omega: H\otimes H\rightarrow A$ be two linear maps.  Denote  $\omega(h\otimes l)=\omega(h,l)$, where $h,l\in H$.

The pair $(\cdot, \omega)$ is termed a twisted partial action of $H$ on $A$, if the following conditions hold:
$\forall a, b\in A$ and $h,l\in H$,
\begin{equation}\label{E1}
	1_{H}\cdot a=a,
\end{equation}
\begin{equation}\label{E2}
	h\cdot(ab)=(h_{1}\cdot a)(h_{2}\cdot b),
\end{equation}
\begin{equation}\label{E3}
(h_{1}\cdot (l_{1}\cdot a))\omega(h_{2}, l_{2})=\omega(h_{1}, l_{1})(h_{2}l_{2}\cdot a),
\end{equation}
\begin{equation}\label{E4}
\omega(h, l)=\omega(h_{1}, l_{1})(h_{2}l_{2}\cdot 1_{A}).
\end{equation}

Given a twisted partial action $(H, A, \cdot, \omega)$,  a product is defined on the vector space $A\otimes H$:
$$
(a\otimes h)(b\otimes l)=a(h_{1}\cdot b)\omega(h_{2},l_{1})\otimes h_{3}l_{2},
$$
for all $a, b\in A, h, l\in H$. Define
 $A\sharp _{(\cdot,\omega)}H=(A\otimes H)(1_{A}\otimes 1_{H})$, with $a\sharp h=a(h_{1}\cdot 1_{A})\otimes h_{2}$. 
In general, $A\otimes H$ with the above defined product is neither associative nor unital.  Necessary and sufficient conditions for $A\sharp _{(\cdot,\omega)}H$ to be associative and unital with unit $1_{A}\sharp 1_{H}$ are: 
\begin{equation}\label{E5}
	\omega(h,1_{H})=\omega(1_{H},h)=h\cdot 1_{A},
\end{equation} 
\begin{equation}\label{E6}
	(h_{1}\cdot \omega(m_{1},t_{1}))\omega(h_{2}, m_{2}t_{2})=\omega(h_{1},m_{1}))\omega(h_{2}m_{2}, t),
\end{equation} 
for all $h, m, t\in H$. The algebra $A\sharp _{(\cdot,\omega)}H$ is called a partial crossed product. 

\section{Twisted partial actions of $\mathbb{H}_{4}$ on $\mathbb{H}_{ss}$}

We now construct explicit examples of twisted partial actions of the Sweedler Hopf algebra $\mathbb{H}_{4}$ on the split semi-quaternion algebra $\mathbb{H}_{ss}$.

Define two linear maps $\cdot: \mathbb{H}_{4}\otimes \mathbb{H}_{ss}\rightarrow  \mathbb{H}_{ss} $ and $\omega: \mathbb{H}_{4}\otimes\mathbb{H}_{4}\rightarrow \mathbb{H}_{ss}$ via the following table: 
$$\begin{array}{|c|c|c|c|c|}
	\hline   \cdot                           & 1 & e_{1} &e_{2}&e_{3} \\
	\hline 1                 & 1 & e_{1} &e_{2}&e_{3} \\
	\hline  g                     & 0& 0 & 0 & 0 \\
	\hline  \nu                   &k_{1}1+k_{2}e_{1}+k_{3}e_{2}-k_{4}e_{3}& k_{2}1+k_{1}e_{1}+k_{4}e_{2}-k_{3}e_{3}&  k_{1}e_{2}+k_{2}e_{3} & k_{2}e_{2}+k_{1}e_{3} \\
	\hline  g\nu                    & l_{1}1+l_{2}e_{1}+l_{3}e_{2}+l_{4}e_{3}&l_{2}1+l_{1}e_{1}+l_{4}e_{2}+l_{3}e_{3}& l_{1}e_{2}-l_{2}e_{3}& -l_{2}e_{2}+l_{1}e_{3}   \\
	\hline
\end{array}$$
and for $\omega$, the values are defined as:
$$
\begin{cases}
	\omega(1,1)=1, \omega(1,g)=0, \omega(1,\nu)=k_{1}1+k_{2}e_{1}+k_{3}e_{2}-k_{4}e_{3},\\ \omega(1,g\nu)=l_{1}1+l_{2}e_{1}+l_{3}e_{2}+l_{4}e_{3},&\\
	\omega(g,1)=	\omega(g,g)=	\omega(g,\nu)=	\omega(g,g\nu)=0, &\\
	\omega(\nu,1)=k_{1}1+k_{2}e_{1}+k_{3}e_{2}-k_{4}e_{3}, \omega(\nu,g)=0,\\
	\omega(\nu,\nu)=(k^{2}_{1}+k^{2}_{2})1+2k_{1}k_{2}e_{1}+2k_{1}k_{3}e_{2}-2k_{1}k_{4}e_{3},\\ \omega(\nu,g\nu)=(k_{1}l_{1}+k_{2}l_{2})1+(k_{2}l_{1}+k_{1}l_{2})e_{1}+(k_{3}l_{1}+k_{4}l_{2}+k_{1}l_{3}+k_{2}l_{4})e_{2}\\
	\hspace{2cm}+(-k_{4}l_{1}-k_{3}l_{2}+k_{2}l_{3}+k_{1}l_{4})e_{3}, &\\
	\omega(g\nu,1)=l_{1}1+l_{2}e_{1}+l_{3}e_{2}+l_{4}e_{3}, \omega(g\nu,g)=\omega(g\nu,g\nu)=0,\\ \omega(g\nu,\nu)=(k_{1}l_{1}+k_{2}l_{2})1+(k_{2}l_{1}+k_{1}l_{2})e_{1}+(k_{3}l_{1}+k_{4}l_{2}+k_{1}l_{3}+k_{2}l_{4})e_{2}\\
	\hspace{2cm}+(-k_{4}l_{1}-k_{3}l_{2}+k_{2}l_{3}+k_{1}l_{4})e_{3}, &
\end{cases}
$$
where $k_{i}, l_{j}(i=1,2,3,4, j=1,2,3,4)$ are parameters. 

\begin{theorem}
	With the above-defined maps $\cdot$ and $\omega$, the pair $(\cdot, \omega)$ is a twisted partial action of $\mathbb{H}_{4}$ on $\mathbb{H}_{ss}$. 
\end{theorem}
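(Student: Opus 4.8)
The plan is to verify the four defining identities \eqref{E1}--\eqref{E4} directly. Since the action $\cdot$, the map $\omega$, the coproduct $\Delta$ and both multiplications are linear, each identity is multilinear in its arguments, so it suffices to test it on basis vectors: $\{1,g,\nu,g\nu\}$ for $\mathbb{H}_4$ and $\{1,e_1,e_2,e_3\}$ for $\mathbb{H}_{ss}$. First I would record the coproducts, the only one not already displayed being $\Delta(g\nu)=\Delta(g)\Delta(\nu)=1\otimes g\nu+g\nu\otimes g$. Identity \eqref{E1} is read directly off the first row of the action table. The whole computation is then streamlined by two structural facts visible in the defining data: the entire $g$-row vanishes, i.e.\ $g\cdot a=0$ for all $a$, and every value of $\omega$ with $g$ in either slot is zero. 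Together these annihilate the large majority of the terms produced by the coproduct expansions.

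For \eqref{E2} I would run over the four choices of $h$. The cases $h=1$ and $h=g$ are immediate, the latter because $\Delta(g)=g\otimes g$ collapses the right-hand side to $(g\cdot a)(g\cdot b)=0=g\cdot(ab)$. Feeding $g\cdot a=0$ into $\Delta(\nu)=g\otimes\nu+\nu\otimes 1$ and $\Delta(g\nu)=1\otimes g\nu+g\nu\otimes g$ reduces \eqref{E2} to the two compact rules $\nu\cdot(ab)=(\nu\cdot a)b$ and $g\nu\cdot(ab)=a(g\nu\cdot b)$; that is, $\nu$ acts as a right-multiplication-compatible map and $g\nu$ as a left-multiplication-compatible one. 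Each rule is then confirmed against the sixteen products $ab$ of basis vectors of $\mathbb{H}_{ss}$, using its multiplication table (notably $e_1^2=1$, $e_2^2=e_3^2=0$, $e_1e_2=e_3$, $e_1e_3=e_2$, $e_2e_1=-e_3$, $e_3e_1=-e_2$).

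For \eqref{E4} I would evaluate $\omega(h_1,l_1)(h_2l_2\cdot 1_A)$ on the sixteen pairs $(h,l)$. The vanishing of $\omega$ on any $g$-slot, together with $1\cdot 1_A=1_A$ and $g\cdot 1_A=0$, reduces most pairs to the tautology $\omega(h,l)=\omega(h,l)$. The genuinely computational pairs are $(\nu,g\nu)$ and $(g\nu,\nu)$, where the right-hand side becomes a single product in $\mathbb{H}_{ss}$ of two first-column action values and exactly reproduces the prescribed formula $\omega(\nu,g\nu)=\omega(g\nu,\nu)$, and $(g\nu,g\nu)$, where two such products occur with opposite signs (because $g\cdot g\nu=\nu$ while $g\nu\cdot g=-\nu$ in $\mathbb{H}_4$) and cancel to give $\omega(g\nu,g\nu)=0$. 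This is precisely how the table of $\omega$-values was reverse-engineered to satisfy \eqref{E4}.

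The main obstacle is \eqref{E3}, which couples the action with the cocycle and must be tested over all sixteen pairs $(h,l)$ and all four basis vectors $a$. The subtlety is that the two sides do not discard the same coproduct terms: the left-hand factor $\bigl(h_1\cdot(l_1\cdot a)\bigr)\omega(h_2,l_2)$ survives only when $h_1,h_2,l_1,l_2\neq g$, whereas on the right $\omega(h_1,l_1)$ forces only $h_1,l_1\neq g$, allowing terms in which $h_2l_2$ is a product such as $g\cdot g\nu=\nu$ that genuinely involves $g$. After the $g$-reductions, the representative substantive instance is $h=l=\nu$, for which both sides collapse to single terms and the identity reads $\nu\cdot(\nu\cdot a)=\omega(\nu,\nu)\,a$; evaluating the iterated action on each basis vector $a$ and comparing with $\omega(\nu,\nu)=(k_1^2+k_2^2)1+2k_1k_2e_1+2k_1k_3e_2-2k_1k_4e_3$ confirms the match. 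The pairs $(\nu,g\nu)$ and $(g\nu,\nu)$ both reduce to the single commutation-type identity $(\nu\cdot a)(g\nu\cdot 1_A)=(\nu\cdot 1_A)(g\nu\cdot a)$, while $(g\nu,g\nu)$ illustrates the asymmetry above: the left side is $a\,\omega(g\nu,g\nu)=0$ from its one surviving term, whereas the right side vanishes only after its two surviving terms cancel. The real difficulty throughout is organizational, namely keeping the noncommutative, nilpotent multiplication of $\mathbb{H}_{ss}$ and the independent left/right term-survival patterns straight across the many parameter-laden expressions.
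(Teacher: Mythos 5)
Your proposal is correct, and at bottom it takes the same route as the paper: direct verification of conditions (\ref{E1})--(\ref{E4}) on basis elements, driven by the two structural facts that the $g$-row of the action vanishes and that $\omega$ vanishes whenever $g$ occupies a slot. The difference is completeness and organization. The paper's proof is explicitly a partial verification: it checks (\ref{E2}) only for $(\nu,e_1,e_2)$ and $(g\nu,e_1,e_2)$, (\ref{E4}) for the pair $(\nu,g\nu)$, and (\ref{E3}) for $(\nu,g\nu)$ at $a=e_1$ (mislabelled there as another instance of (\ref{E4})), dismissing all remaining cases as ``analogous.'' Your reductions, by contrast, make a full check feasible and transparent: (\ref{E2}) collapses to the one-sided rules $\nu\cdot(ab)=(\nu\cdot a)b$ and $g\nu\cdot(ab)=a(g\nu\cdot b)$ (equivalently, $\nu$ acts as left multiplication by $\nu\cdot 1_A$ and $g\nu$ as right multiplication by $g\nu\cdot 1_A$), and for (\ref{E3})--(\ref{E4}) you correctly isolate the only substantive pairs $(\nu,\nu)$, $(\nu,g\nu)$, $(g\nu,\nu)$, $(g\nu,g\nu)$, including the asymmetric term-survival pattern and the cancellation coming from $g(g\nu)=\nu$ versus $(g\nu)g=-\nu$ that forces $\omega(g\nu,g\nu)=0$; the identities you reach ($\nu\cdot(\nu\cdot a)=\omega(\nu,\nu)a$ and $(\nu\cdot a)(g\nu\cdot 1_A)=(\nu\cdot 1_A)(g\nu\cdot a)$) are exactly the ones the paper's spot checks instantiate. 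In short, your plan subsumes the paper's proof and would yield a genuinely complete verification, whereas the published argument verifies representative cases only.
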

\begin{proof}
It suffices to verify that	$(\cdot, \omega)$ satisfies the twisted partial action conditions (\ref{E1})-(\ref{E4}). While the full proof is tedious, we demonstrate a partial verification. 
	$$
	\nu \cdot (e_{1}e_{2})=\nu \cdot e_{3}=k_{2}e_{2}+k_{1}e_{3}
	$$
and
	\begin{eqnarray*}
		(\nu _{1}\cdot e_{1})	(\nu _{2}\cdot e_{2})
		&=&	(g\cdot e_{1})	(\nu \cdot e_{2})+(\nu\cdot e_{1})	(1 \cdot e_{2})\\
		&=&	(k_{2}1+k_{1}e_{1}+k_{4}e_{2}-k_{3}e_{3})  e_{2}\\
	&=&	k_{2}e_{2}+k_{1}e_{1}e_{2}+k_{4}e_{2}e_{2}-k_{3}e_{3}e_{2}\\
		&=&	k_{2}e_{2}+k_{1}e_{3},
	\end{eqnarray*}
	which confirms that the condtion (\ref{E2}) holds for $\nu, e_{1}, e_{2}$. A similar verification applies to $g\nu, e_{1}, e_{2}$: 
	$$
	g\nu \cdot (e_{1}e_{2})=g\nu \cdot e_{3}=-l_{2}e_{2}+l_{1}e_{3}
	$$
	and
	\begin{eqnarray*}
		((g\nu) _{1}\cdot e_{1})	((g\nu) _{2}\cdot e_{2})
		&=&	(1\cdot e_{1})	(g\nu \cdot e_{2})+(g\nu\cdot e_{1})	(g \cdot e_{2})\\
		&=&	e_{1}(l_{1}e_{2}-l_{2}e_{3})\\
		&=&	l_{1}e_{1}e_{2}-l_{2}e_{1}e_{3}\\
		&=&	l_{1}e_{3}-l_{2}e_{2}. 
	\end{eqnarray*}
For the condition (\ref{E4}), consider:
	\begin{eqnarray*}
		\omega(\nu_{1}, (g\nu)_{1})(\nu_{2}(g\nu)_{2}\cdot 1)
		&=& 	\omega(g, g\nu)(\nu g\cdot 1)+\omega(\nu, 1)(g\nu\cdot 1)+\omega(\nu, g\nu)(g\cdot 1)\\
		&=& \omega(\nu, 1)(g\nu\cdot 1)\\
		&=& (k_{1}1+k_{2}e_{1}+k_{3}e_{2}-k_{4}e_{3})(l_{1}1+l_{2}e_{1}+l_{3}e_{2}+l_{4}e_{3})\\
		&=& (k_{1}l_{1}+k_{2}l_{2})1+(k_{1}l_{2}+k_{2}l_{1})e_{1}+(k_{1}l_{3}+k_{2}l_{4}+k_{3}l_{1}+k_{4}l_{2})e_{2}\\
	&&	+(k_{1}l_{4}+k_{2}l_{3}-k_{3}l_{2}-k_{4}l_{1})e_{3}\\
	&=&\omega(\nu, g\nu).
	\end{eqnarray*}
	This demonstrates the validity of condition (\ref{E4}).
	Since \begin{eqnarray*}
		&&	(\nu_{1}\cdot ((g\nu)_{1}\cdot e_{1}))\omega(\nu_{2}, (g\nu)_{2})\\
		&=&(g\cdot (1\cdot e_{1}))\omega(\nu, g\nu)+(g\cdot (g\nu\cdot e_{1}))\omega(\nu, g)\\
		&&+(\nu\cdot (1\cdot e_{1}))\omega(1, g\nu)+(\nu\cdot (g\nu\cdot e_{1}))\omega(1, g)\\
		&=& (\nu\cdot  e_{1})\omega(1, g\nu)\\
		&=&(k_{2}1+k_{1}e_{1}+k_{4}e_{2}-k_{3}e_{3})(l_{1}1+l_{2}e_{1}+l_{3}e_{2}+l_{4}e_{3})\\
		&=&(k_{2}l_{1}+k_{1}l_{2})1+(k_{2}l_{2}+k_{1}l_{1})e_{1}+(k_{2}l_{3}+k_{1}l_{4}+k_{4}l_{1}+k_{3}l_{2})e_{2}\\
		&&+(k_{2}l_{4}+k_{1}l_{3}-k_{4}l_{2}-k_{3}l_{1})e_{3}
	\end{eqnarray*}
	and 
	\begin{eqnarray*}
		\omega(\nu_{1},(g\nu)_{1})(\nu_{2}(g\nu)_{2}\cdot e_{1})
		&=&\omega(g,g\nu)(\nu g\cdot e_{1})+\omega(\nu,1)( g\nu\cdot e_{1})+\omega(\nu,g \nu)( g\cdot e_{1})\\
		&=&\omega(\nu,1)( g\nu\cdot e_{1})\\
		&=&(k_{1}1+k_{2}e_{1}+k_{3}e_{2}-k_{4}e_{3})(l_{2}1+l_{1}e_{1}+l_{4}e_{2}+l_{3}e_{3})\\
		&=&(k_{1}l_{2}+k_{2}l_{1})1+(k_{1}l_{1}+k_{2}l_{2})e_{1}+(k_{1}l_{4}+k_{2}l_{3}+k_{3}l_{2}+k_{4}l_{1})e_{2}\\
		&&+(k_{1}l_{3}+k_{2}l_{4}-k_{3}l_{1}-k_{4}l_{2})e_{3},
	\end{eqnarray*}
	it follows that 
	$$
	\omega(\nu_{1}, (g\nu)_{1})(\nu_{2}(g\nu)_{2}\cdot 1)=\omega(\nu_{1},(g\nu)_{1})(\nu_{2}(g\nu)_{2}\cdot e_{1}),
	$$
	which demonstrates the validity of condition (\ref{E4}) for $\nu, g\nu, e_{1}$. The detailed computations for other cases follow analogous reasoning.
\end{proof}

\begin{lemma}\label{F1}
With $\cdot$ and $\omega$ defined as above, conditions (\ref{E5}) and (\ref{E6}) hold. 
\end{lemma}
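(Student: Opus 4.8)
The plan is to settle the unital axiom (\ref{E5}) by direct comparison of the defining data and then to reduce the cocycle axiom (\ref{E6}) to a short list of genuinely non-trivial triples, exploiting the strong degeneracy of the action.

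For (\ref{E5}) I would read the two sides off the tables: the bottom entry of each column of the action table is $h\cdot 1_{A}$ for $h\in\{1,g,\nu,g\nu\}$, and matching these against the listed $\omega$-values gives $\omega(h,1_{H})=\omega(1_{H},h)=h\cdot 1_{A}$ on each basis element; bilinearity of $\omega$ then yields (\ref{E5}) in general.

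Before addressing (\ref{E6}) I would record two simplifications. Writing $\kappa=\nu\cdot 1_{A}$ and $\lambda=g\nu\cdot 1_{A}$, and combining (\ref{E2}) with $\Delta(\nu)=g\otimes\nu+\nu\otimes 1$, $\Delta(g\nu)=1\otimes g\nu+g\nu\otimes g$ and the fact that $g$ annihilates $\mathbb{H}_{ss}$, one obtains the closed forms $1\cdot a=a$, $g\cdot a=0$, $\nu\cdot a=\kappa a$ and $g\nu\cdot a=a\lambda$ for every $a\in\mathbb{H}_{ss}$; thus each expression $h_{1}\cdot\omega(m_{1},t_{1})$ is merely a left or right multiplication, or zero. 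Secondly, $\omega$ vanishes whenever either slot equals $g$ and $g$ acts as zero, so if any of $h,m,t$ equals $g$ then every summand of (\ref{E6}) carries a factor $\omega(g,x)$, $\omega(x,g)$ or $g\cdot x$ and both sides vanish. This confines the work to $h,m,t\in\{1,\nu,g\nu\}$, that is, $27$ triples.

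For these I would expand the three coproducts, reduce the internal products $m_{2}t_{2}$ and $h_{2}m_{2}$ in $\mathbb{H}_{4}$, discard every summand containing $\omega(g,x)$, $\omega(x,g)$ or $g\cdot x$, and evaluate the survivors with the closed forms and the $\omega$-table (keeping $\omega$ bilinear, so that $\omega(h_{2},-\nu)=-\omega(h_{2},\nu)$, etc.). The triples containing a $1$ or only a single $\nu$ or $g\nu$ collapse to a one- or two-term identity valid on inspection; for example $h=\nu,\,m=g\nu,\,t=1$ gives $\kappa\lambda=\kappa\lambda$. The hard part will be the triples built from two or three copies of $\nu$ and $g\nu$: several coproduct summands then survive, $\mathbb{H}_{ss}$ is noncommutative so ordered products such as $\lambda\kappa\lambda$ must be carried in order, and the exceptional value $\omega(g\nu,g\nu)=0$ enters. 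The representative worst case $h=m=t=g\nu$ shows the mechanism: once the grouplike summands are deleted, both sides reduce to $-\lambda\kappa\lambda+\lambda\kappa\lambda=0$, the equality arising from a cancellation rather than a termwise match. All remaining mixed triples yield to the same bookkeeping, which is routine once the two simplifications are in place.
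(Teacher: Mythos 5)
Your proposal is correct, and it takes a genuinely different route from the paper. The paper's own proof is a brute-force sample: it declares (\ref{E5}) trivial, verifies (\ref{E6}) only for the single triple $(\nu,\nu,g\nu)$ by expanding everything in the coordinates $k_i,l_j$, and declares the remaining cases analogous. Your two reductions replace coordinates by structure: first, the observation that every term of (\ref{E6}) dies when any of $h,m,t$ equals $g$ (since $g$ acts as zero and $\omega$ vanishes whenever a slot contains $g$) disposes of $37$ of the $64$ triples at once; second, the closed forms $\nu\cdot a=\kappa a$ and $g\nu\cdot a=a\lambda$ with $\kappa=\nu\cdot 1_A$, $\lambda=g\nu\cdot 1_A$ — which indeed follow from (\ref{E2}) and $\Delta(g\nu)=1\otimes g\nu+g\nu\otimes g$, legitimately available since Theorem 3.1 precedes the lemma, or can be checked directly against the table — turn every surviving summand into an ordered monomial in $\kappa$ and $\lambda$. (One could push this further: the table gives $\omega(\nu,\nu)=\kappa^2$ and $\omega(\nu,g\nu)=\omega(g\nu,\nu)=\kappa\lambda$, so the whole cocycle datum is monomial in $\kappa,\lambda$.) I verified your two representative computations: the triple $(\nu,g\nu,1)$ does reduce to $\kappa\lambda=\kappa\lambda$, and for $(g\nu,g\nu,g\nu)$ both sides are $-\lambda\kappa\lambda+\lambda\kappa\lambda=0$, using $g\nu\, g\nu=0$, $g\nu g=-\nu$ and $\omega(g\nu,g\nu)=0$; the same cancellation mechanism handles $(\nu,g\nu,g\nu)$ and $(g\nu,g\nu,\nu)$. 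What your approach buys is that the leftover cases really are mechanical and the reason for the identity is visible (termwise matching or an explicit sign cancellation), whereas the paper's coordinate computation gives no insight into why the other $63$ cases work; your scheme also transfers verbatim to the actions on $\mathbb{H}_s$ and $\mathbb{H}_{00}$ in Sections 4 and 5. Two small blemishes: your description of (\ref{E5}) refers to ``the bottom entry of each column'' of the action table, but $h\cdot 1_A$ is read off the \emph{first column} (row $h$, column $1$) — the intended matching $\omega(h,1_H)=\omega(1_H,h)=h\cdot 1_A$ is nevertheless exactly right on each basis element; and, like the paper, you still leave a residue of unchecked triples to ``routine bookkeeping,'' though after your reductions that claim is far easier to accept.
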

\begin{proof}
	Condition (\ref{E5})  is trivially satisfied. Since 
	\begin{eqnarray*}
		& &	(\nu_{1}\cdot \omega(\nu_{1},(g\nu)_{1}))\omega(\nu_{2}, \nu_{2}(g\nu)_{2})\\
		&=& 	(g\cdot \omega(g,g\nu)\omega(\nu, \nu g)+	(g\cdot \omega(\nu,1)\omega(\nu, g\nu )+	(g\cdot \omega(\nu,g\nu)\omega(\nu, g )\\
		&&+	(\nu\cdot \omega(g,g\nu)\omega(1, \nu g)+	(\nu\cdot \omega(\nu,1)\omega(1, g\nu )+	(\nu\cdot \omega(\nu,g\nu)\omega(1, g )\\
		&=& 	(\nu\cdot \omega(\nu,1)\omega(1, g\nu )\\
		&=&	(k_{1}(\nu\cdot 1)+k_{2}(\nu\cdot e_{1})+k_{3}(\nu\cdot e_{2})-k_{4}(\nu\cdot e_{3})))(l_{1}1+l_{2}e_{1}+l_{3}e_{2}+l_{4}e_{3})\\
		&=&(\left(k_1^2+k_2^2\right) l_1+2 k_1 k_2 l_2)1+(2 k_1 k_2 l_1+\left(k_1^2+k_2^2\right) l_2)e_{1}\\
		&&+(2 k_1 k_3 l_1+2 k_1 k_4 l_2+\left(k_1^2+k_2^2\right) l_3+2 k_1 k_2 l_4)e_{2}\\
		&&+(-2 k_1 k_4 l_1-2 k_1 k_3 l_2+2 k_1 k_2 l_3+\left(k_1^2+k_2^2\right) l_4)e_{3}
	\end{eqnarray*}
	and
	\begin{eqnarray*}
		&&\omega(\nu_{1},\nu_{1})\omega(\nu_{2}\nu_{2}, g\nu)\\
		&=&\omega(g,\nu)\omega(\nu, g\nu)+\omega(\nu,g)\omega(\nu, g\nu)+\omega(\nu,\nu)\omega(1, g\nu)\\
		&=&\omega(\nu,\nu)\omega(1, g\nu)\\
		&=&((k^{2}_{1}+k^{2}_{2})1+2k_{1}k_{2}e_{1}+2k_{1}k_{3}e_{2}-2k_{1}k_{4}e_{3})(l_{1}1+l_{2}e_{1}+l_{3}e_{2}+l_{4}e_{3})\\
		&=&(\left(k_1^2+k_2^2\right) l_1+2 k_1 k_2 l_2)1+(2 k_1 k_2 l_1+\left(k_1^2+k_2^2\right) l_2)e_{1}\\
		&&+(2 k_1 k_3 l_1+2 k_1 k_4 l_2+\left(k_1^2+k_2^2\right) l_3+2 k_1 k_2 l_4)e_{2}\\
		&&+(-2 k_1 k_4 l_1-2 k_1 k_3 l_2+2 k_1 k_2 l_3+\left(k_1^2+k_2^2\right) l_4)e_{3},
	\end{eqnarray*}
	it follows that condition (\ref{E6}) holds for $\nu, \nu, g\nu$. Similar computations for other cases follow analogous reasoning.
\end{proof}

For the twisted partial action $(\mathbb{H}_{4}, \mathbb{H}_{ss},\cdot, \omega)$, by Lemma \ref{F1}, we have  the partial crossed product $\mathbb{H}_{ss}\sharp_{(\cdot,\omega)}\mathbb{H}_{4}$. Computing its basis elements, we find: 
$$
\begin{cases}
	1\sharp 1=1\otimes 1, e_{1}\sharp 1=e_{1}\otimes 1,e_{2}\sharp 1=e_{2}\otimes 1,e_{3}\sharp 1=e_{3}\otimes 1,&\\
	1\sharp g=e_{1}\sharp g=e_{2}\sharp g=e_{3}\sharp g=0,&\\
	1\sharp \nu=k_{1}1\otimes 1+k_{2}e_{1}\otimes 1+k_{3}e_{2}\otimes 1-k_{4}e_{3}\otimes 1, e_{3}\sharp \nu=k_{1}e_{3}\otimes 1-k_{2}e_{2}\otimes 1,&\\
	e_{1}\sharp \nu=k_{1}e_{1}\otimes 1+k_{2}1\otimes 1+k_{3}e_{3}\otimes 1-k_{4}e_{2}\otimes 1,e_{2}\sharp \nu=k_{1}e_{2}\otimes 1-k_{2}e_{3}\otimes 1,&\\
	1\sharp g\nu=1\otimes g\nu+ l_{1}1 \otimes g+ l_{2}e_{1} \otimes g+ l_{3}e_{2} \otimes g+ l_{4}e_{3} \otimes g,&\\
		e_{1}\sharp g\nu=e_{1}\otimes g\nu+ l_{1}e_{1} \otimes g+ l_{2}1 \otimes g+ l_{3}e_{3} \otimes g+ l_{4}e_{2} \otimes g,&\\
e_{2}\sharp g\nu=e_{2}\otimes g\nu+ l_{1}e_{2} \otimes g- l_{2}e_{3} \otimes g, e_{3}\sharp g\nu=e_{3}\otimes g\nu+ l_{1}e_{3} \otimes g- l_{2}e_{2} \otimes g,
\end{cases}
$$
and it follows that $1\sharp 1, e_{1}\sharp 1,e_{2}\sharp 1,e_{3}\sharp 1,$
$1\sharp g\nu$, $e_{1}\sharp g\nu$,$e_{2}\sharp g\nu$, $e_{3}\sharp g\nu$ is a basis for the vector space $\mathbb{H}_{ss}\sharp_{(\cdot, \omega)} \mathbb{H}_{4}$.
The multiplication of the partial crossed product $\mathbb{H}_{ss}\sharp_{(\cdot, \omega)} \mathbb{H}_{4}$ is  given by 
$$
(a\sharp h)(b\sharp l)=a(h_{1}\cdot b)\omega(h_{2},l_{1})\sharp h_{3}l_{2},
$$
for all $a, b\in \mathbb{H}_{ss}$ and $h, l\in \mathbb{H}_{4}$. For example, 
\begin{eqnarray*}
	(e_{1}\sharp \nu)	(e_{2}\sharp \nu)
	&=&e_{1}(\nu_{1}\cdot e_{2})\omega(\nu_{2},\nu_{1})\sharp \nu_{3}\nu_{2}\\
	&=&e_{1}(g\cdot e_{2})\omega(g,g)\sharp \nu\nu+e_{1}(g\cdot e_{2})\omega(g,\nu)\sharp \nu+e_{1}(g\cdot e_{2})\omega(\nu,g)\sharp \nu\\
	&&+e_{1}(g\cdot e_{2})\omega(\nu,\nu)\sharp 1+e_{1}(\nu\cdot e_{2})\omega(1,g)\sharp \nu+e_{1}(\nu\cdot e_{2})\omega(1,\nu)\sharp 1\\
	&=&e_{1}(\nu\cdot e_{2})\omega(1,\nu)\sharp 1\\
	&=&e_{1}(k_{1}e_{2}+k_{2}e_{3})(k_{1}1+k_{2}e_{1}+k_{3}e_{2}-k_{4}e_{3})\sharp 1\\
	&=&(k_{1}^{2}-k_{2}^{2})e_{3}\sharp 1.
\end{eqnarray*}

\section{Twisted partial actions of $\mathbb{H}_{4}$ on $\mathbb{H}_{s}$}

Extending the framework to the split quaternion algebra $ \mathbb{H}_{s}$, define two linear maps $\cdot: \mathbb{H}_{4}\otimes \mathbb{H}_{s}\rightarrow  \mathbb{H}_{s} $ and $\omega: \mathbb{H}_{4}\otimes\mathbb{H}_{4}\rightarrow \mathbb{H}_{s}$ as follows: 
$$\resizebox{1\hsize}{!}{$\begin{array}{|c|c|c|c|c|}
		\hline   \cdot                           & 1 & e_{1} &e_{2}&e_{3} \\
		\hline 1                 & 1 & e_{1} &e_{2}&e_{3} \\
		\hline  g                     & 0& 0 & 0 & 0 \\
		\hline  \nu                   &e_{3}& e_{2}&  e_{1} & 1 \\
		\hline  g\nu                    & l_{1}1-l_{2}e_{1}+l_{3}e_{2}+l_{4}e_{3}&l_{2}1+l_{1}e_{1}-l_{4}e_{2}+l_{3}e_{3}& l_{3}1-l_{4}e_{1}+l_{1}e_{2}+l_{2}e_{3}& l_{4}1+l_{3}e_{1}-l_{2}e_{2}+l_{1}e_{3}  \\
		\hline
	\end{array}$}$$

$$
\begin{cases}
	\omega(1,1)=1, \omega(1,g)=0, \omega(1,\nu)=e_{3}, \omega(1,g\nu)=l_{1}1-l_{2}e_{1}+l_{3}e_{2}+l_{4}e_{3},&\\
	\omega(g,1)=	\omega(g,g)=	\omega(g,\nu)=	\omega(g,g\nu)=0, &\\
	\omega(\nu,1)=e_{3}, \omega(\nu,g)=0,
	\omega(\nu,\nu)=1,\\ \omega(\nu,g\nu)=l_{4}1+l_{3}e_{1}-l_{2}e_{2}+l_{1}e_{3}, &\\
	\omega(g\nu,1)=l_{1}1-l_{2}e_{1}+l_{3}e_{2}+l_{4}e_{3}, \omega(g\nu,g)=\omega(g\nu,g\nu)=0,\\ \omega(g\nu,\nu)=l_{4}1+l_{3}e_{1}-l_{2}e_{2}+l_{1}e_{3}, &
\end{cases}
$$
where $ l_{j}( j=1,2,3,4)$ are parameters. 

\begin{theorem}
	With  $\cdot$ and $\omega$ defined above, the pair $(\cdot, \omega)$ is also a twisted partial action of $\mathbb{H}_{4}$ on $\mathbb{H}_{s}$. 
\end{theorem}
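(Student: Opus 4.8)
The plan is to verify directly that $(\cdot,\omega)$ satisfies the four axioms (\ref{E1})--(\ref{E4}), exactly as in the proof of the analogous statement for $\mathbb{H}_{ss}$, the work being reduced drastically by first recording three structural facts about the action. Reading the defining table, I would first check on the basis $1,e_1,e_2,e_3$ (and extend by linearity) that $g\cdot a=0$ for all $a$, that $\nu$ acts as left multiplication by $e_3=\nu\cdot 1$, i.e. $\nu\cdot a=e_3a$, and that $g\nu$ acts as right multiplication by $w:=g\nu\cdot 1=l_11-l_2e_1+l_3e_2+l_4e_3$, i.e. $g\nu\cdot a=aw$. Each of these is a one-line computation in $\mathbb{H}_s$; for instance $\nu\cdot e_2=e_3e_2=e_1$ and $g\nu\cdot e_1=e_1w=l_21+l_1e_1-l_4e_2+l_3e_3$, both matching the table.

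Granting these facts, condition (\ref{E1}) is immediate from the first row of the table. For (\ref{E2}) I would expand the coproducts $\Delta(\nu)=g\otimes\nu+\nu\otimes 1$ and $\Delta(g\nu)=1\otimes g\nu+g\nu\otimes g$; since $g\cdot a=0$, every summand containing a factor $g\cdot(-)$ drops out, so the identity collapses to $\nu\cdot(ab)=(\nu\cdot a)b$ and $g\nu\cdot(ab)=a(g\nu\cdot b)$. By the structural facts these read $e_3(ab)=(e_3a)b$ and $(ab)w=a(bw)$, which are nothing but the associativity of $\mathbb{H}_s$. Thus (\ref{E2}) requires no case analysis at all.

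Axioms (\ref{E3}) and (\ref{E4}) are finite checks over $h,l\in\{1,g,\nu,g\nu\}$ (and, for (\ref{E3}), $a$ ranging over the basis of $\mathbb{H}_s$), and here the decisive simplification is that the entire $g$-row and $g$-column of $\omega$ vanish while $g\cdot(-)=0$. Consequently, after expanding the coproducts, every term carrying a $g$ in a surviving tensor leg is annihilated either by $\omega(g,-)=\omega(-,g)=0$ or by $g\cdot 1=0$, and each axiom reduces to a single surviving product in $\mathbb{H}_s$. For example, for (\ref{E4}) with $h=\nu$ and $l=g\nu$ only the term $\omega(\nu,1)(g\nu\cdot 1)=e_3w$ survives, and one checks $e_3w=l_41+l_3e_1-l_2e_2+l_1e_3=\omega(\nu,g\nu)$; similarly for (\ref{E3}) with $h=l=\nu$ both sides collapse to $e_3(e_3a)=a$. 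I would carry out the remaining representative cases in the same manner and note that all others are analogous.

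The main obstacle will be the sheer bookkeeping in (\ref{E3}), since it is cubic in the data (two coproduct expansions together with an application of the action to $a$) and the cocycle values $\omega(\nu,g\nu)=\omega(g\nu,\nu)$ are the least transparent entries. However, because the surviving terms are so few, each case ultimately rests only on the two identities $e_3^2=1$ and $e_1^2=-1$ together with the multiplication rules of $\mathbb{H}_s$, so no genuinely new difficulty arises beyond careful enumeration.
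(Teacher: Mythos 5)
Your approach is correct in substance but genuinely different from the paper's: the paper's proof of this theorem simply defers to Section 3, where the axioms are verified by brute-force evaluation on basis elements in coordinates, whereas you first extract the structural facts $g\cdot a=0$, $\nu\cdot a=e_3a$, and $g\nu\cdot a=aw$ with $w=g\nu\cdot 1=l_11-l_2e_1+l_3e_2+l_4e_3$. These facts are indeed correct (they check against the table using $e_3e_1=e_2$, $e_3e_2=e_1$, $e_3^2=1$, etc.), and they buy you a visibly cleaner argument: condition (\ref{E2}) reduces with no case analysis to the associativity identities $e_3(ab)=(e_3a)b$ and $(ab)w=a(bw)$, and the nontrivial cases of (\ref{E3}), such as $h=\nu$, $l=g\nu$, collapse to the single identity $(e_3a)w=e_3(aw)$ --- structure that is invisible in the paper's coordinate computations.

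However, one of your blanket claims is false and would derail the ``all others are analogous'' step if applied literally: it is \emph{not} true that every term carrying a $g$ in a surviving tensor leg is annihilated by $\omega(g,-)=\omega(-,g)=0$ or by $g\cdot 1=0$. Take $h=l=g\nu$ in (\ref{E4}): expanding $\Delta(g\nu)=1\otimes g\nu+g\nu\otimes g$ produces, among the four terms, $\omega(1,g\nu)\,(g\nu g\cdot 1)$ and $\omega(g\nu,1)\,(gg\nu\cdot 1)$. Here the $g$ in the second legs neither meets $\omega$ nor acts alone; it multiplies inside $\mathbb{H}_{4}$, giving $g\nu g=-\nu$ and $gg\nu=\nu$. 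Both terms survive individually, equal to $-we_3$ and $we_3$, and the axiom holds only because they \emph{cancel}, matching $\omega(g\nu,g\nu)=0$ --- not because a single product survives. The same cancellation is required in (\ref{E3}) for $h=l=g\nu$, where the right-hand side is $-w(e_3a)+w(e_3a)=0$ while the left-hand side vanishes term by term. So your plan does complete, but the simplification principle must be weakened to: a term dies when $g$ hits $\omega$ directly or acts by itself on an element, and in the residual $(g\nu,g\nu)$ cases one must additionally invoke the anticommutation $\nu g=-g\nu$ to produce the cancelling pair.
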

\begin{proof}
	 The verification follows similar steps as in Section 3.
\end{proof}

\section{Twisted partial actions of $\mathbb{H}_{4}$ on $\mathbb{H}_{00}$}

Define two linear maps $\cdot: \mathbb{H}_{4}\otimes \mathbb{H}_{00}\rightarrow  \mathbb{H}_{00} $ and $\omega: \mathbb{H}_{4}\otimes\mathbb{H}_{4}\rightarrow \mathbb{H}_{00}$ as follows: 
$$\begin{array}{|c|c|c|c|c|}
	\hline   \cdot                           & 1 & e_{1} &e_{2}&e_{3} \\
	\hline 1                 & 1 & e_{1} &e_{2}&e_{3} \\
	\hline  g                     & 0& 0 & 0 & 0 \\
	\hline  \nu                   &k_{1}e_{1}+k_{2}e_{2}+k_{3}e_{3}& -k_{2}e_{3}&  k_{1}e_{3} & 0  \\
	\hline  g\nu                    & l_{1}e_{0}+l_{2}e_{1}+l_{3}e_{2}+l_{4}e_{3}&l_{1}e_{1}+l_{3}e_{3}& l_{1}e_{2}-l_{2}e_{3}& l_{1}e_{3}    \\
	\hline
\end{array}$$
$$
\begin{cases}
	\omega(1,1)=1, \omega(1,g)=0, \omega(1,\nu)=k_{1}e_{1}+k_{2}e_{2}+k_{3}e_{3},\\ \omega(1,g\nu)=l_{1}1+l_{2}e_{1}+l_{3}e_{2}+l_{4}e_{3},&\\
	\omega(g,1)=	\omega(g,g)=	\omega(g,\nu)=	\omega(g,g\nu)=0, &\\
	\omega(\nu,1)=k_{1}e_{1}+k_{2}e_{2}+k_{3}e_{3}, \omega(\nu,g)=\omega(\nu,\nu)=0,\\ \omega(\nu,g\nu)=k_{1}l_{1}e_{1}+k_{2}l_{1}e_{2}+(k_{3}l_{1}-k_{2}l_{2}+k_{1}l_{3})e_{3}, &\\
	\omega(g\nu,1)=l_{1}1+l_{2}e_{1}+l_{3}e_{2}+l_{4}e_{3}, \omega(g\nu,g)=\omega(g\nu,g\nu)=0,\\ \omega(g\nu,\nu)=k_{1}l_{1}e_{1}+k_{2}l_{1}e_{2}+(k_{3}l_{1}-k_{2}l_{2}+k_{1}l_{3})e_{3}, &
\end{cases}
$$
where $k_{i}, l_{j}(i=1,2,3, j=1,2,3,4)$ are parameters. 
\begin{theorem}
	With  $\cdot$ and $\omega$ defined above, the pair $(\cdot, \omega)$ is also a twisted partial action of $\mathbb{H}_{4}$ on $\mathbb{H}_{00}$. 
\end{theorem}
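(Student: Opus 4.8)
The plan is to verify the four defining conditions (\ref{E1})--(\ref{E4}) directly, exactly as in the proof of the first theorem of Section 3. Since $\cdot$ and $\omega$ are linear (resp. bilinear), it suffices to check each identity on the basis $\{1,g,\nu,g\nu\}$ of $\mathbb{H}_{4}$ and the basis $\{1,e_{1},e_{2},e_{3}\}$ of $\mathbb{H}_{00}$. I would first record the coproducts that drive every expansion: $\Delta(1)=1\otimes 1$, $\Delta(g)=g\otimes g$, $\Delta(\nu)=g\otimes\nu+\nu\otimes 1$, and, since $\Delta$ is an algebra map, $\Delta(g\nu)=1\otimes g\nu+g\nu\otimes g$. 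The decisive simplification throughout is that $g$ acts as zero on all of $\mathbb{H}_{00}$ and that $\omega$ vanishes whenever either argument is $g$; together these annihilate most of the summands produced by the coproducts.

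Condition (\ref{E1}) is immediate from the first row of the action table. For (\ref{E2}), expanding $h\cdot(ab)=(h_{1}\cdot a)(h_{2}\cdot b)$ and discarding the terms containing a factor $g\cdot(-)=0$, the case $h=\nu$ collapses to $\nu\cdot(ab)=(\nu\cdot a)\,b$ and the case $h=g\nu$ to $g\nu\cdot(ab)=a\,(g\nu\cdot b)$. Each then reduces to finitely many identities that follow at once from the multiplication table of $\mathbb{H}_{00}$, whose only nonzero products among $e_{1},e_{2},e_{3}$ are $e_{1}e_{2}=e_{3}=-e_{2}e_{1}$; the remaining products vanish, so most instances read $0=0$ and the rest reduce to this single relation.

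For (\ref{E4}) and (\ref{E3}) I would run the same case analysis over all pairs $(h,l)$ (and, for (\ref{E3}), over $a$). After pruning the $g$-terms, only one or two summands survive on each side, and one matches them against the tabulated values of $\omega$ using the degenerate products of $\mathbb{H}_{00}$. Two structural features make this manageable: first, $\omega(\nu,\nu)=0$ here (in contrast with $\mathbb{H}_{ss}$, where $e_{1}^{2}=1$ forces it to be nonzero), which trims the $(\nu,\nu)$ cases; second, for the pairs involving $g\nu$ the two surviving contributions arise from $\Delta(g\nu)=1\otimes g\nu+g\nu\otimes g$ together with $(g\nu)g=-\nu$ and $g(g\nu)=\nu$, so they enter with opposite signs and cancel — this is precisely what yields $\omega(g\nu,g\nu)=0$ in (\ref{E4}).

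There is no conceptual obstacle; the difficulty is purely bookkeeping, and the error-prone steps are exactly the cancellations just described and the parameter matching in the densest cases $(h,l)=(\nu,g\nu)$ and $(g\nu,\nu)$ of (\ref{E3}). There one must check that the single surviving product on each side reproduces the full value $\omega(\nu,g\nu)=k_{1}l_{1}e_{1}+k_{2}l_{1}e_{2}+(k_{3}l_{1}-k_{2}l_{2}+k_{1}l_{3})e_{3}$, with its $e_{3}$-coefficient assembled correctly from the degenerate multiplication; for instance, for $(h,l)=(\nu,g\nu)$ with $a=e_{1}$ both sides collapse to $-k_{2}l_{1}e_{3}$. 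Carrying out the remaining analogous cases completes the verification for all values of the parameters $k_{i},l_{j}$.
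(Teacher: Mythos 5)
Your proposal is correct and takes essentially the same approach as the paper: the paper's proof of this theorem is a one-line reference back to the direct basis-by-basis verification of conditions (\ref{E1})--(\ref{E4}) carried out in Section 3, which is exactly the computation you organize (with the correct coproduct $\Delta(g\nu)=1\otimes g\nu+g\nu\otimes g$ and the key simplifications that $g$ acts as zero and that $\omega$ vanishes whenever an argument involves $g$). Your sample checks, e.g.\ both sides of (\ref{E3}) reducing to $-k_{2}l_{1}e_{3}$ for $(h,l,a)=(\nu,g\nu,e_{1})$, agree with the tabulated data.
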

\begin{proof}
	The verification follows similar steps as in Section 3.
\end{proof}

\section{Conclusion}
This paper establishes a class of twisted partial Hopf actions on split quaternions, split semi-quaternions, and $\frac{1}{4}$-quaternions. By leveraging the structure of the Sweedler Hopf algebra $ H_4 $, we provide a systematic framework for studying these non-commutative algebras. Future work will explore applications in quantum symmetry and non-commutative geometry.
\def\theequation{5. \arabic{equation}}
\setcounter{equation} {0} \hskip\parindent

\end{document}